\DeclareMathSymbol{\rightrightarrows}  {\mathrel}{AMSa}{"13}
\def\Re{\operatorname{Re}}
\def\varholim@#1#2{\mathop{\vtop{\ialign{##\crcr
        \hfil$#1\m@th\operator@font holim$\hfil\crcr
 \noalign{\nointerlineskip\kern\ex@}#2#1\crcr
 \noalign{\nointerlineskip\kern-\ex@}\crcr}}}}
\def\hocolim{\mathpalette\varholim@\rightarrowfill@} 
\def\hoinvlim{\mathpalette\varholim@\leftarrowfill@}
\newtheorem{theorem}{Theorem}%[section]
\newtheorem{lemma}[theorem]{Lemma}
\newtheorem{proposition}[theorem]{Proposition}
\theoremstyle{definition}
\newtheorem{example}[theorem]{Example}
\newtheorem{remark}[theorem]{Remark}
\begin{document}

\title{\bf Stability for UMAP}
\author{J.F. Jardine\thanks{Supported by NSERC.}}

\affil{\small Department of Mathematics\\University of Western Ontario\\
  London, Ontario, Canada
}
\affil{jardine@uwo.ca}

\maketitle

\begin{abstract}
  \noindent
This paper displays the Healy-McInnes UMAP construction V(X,N) as an iterated ushout of Vietoirs-Rips objects $V(X,D_{x})$, which are associated to extended pseudo metric spaces (ep-metric spaces) defined by a system $N$ of neighbourhoods of the elements of a finite set $X$. An inclusion of finite sets $X \subset Y$ defines a map of UMAP systems $V(X,N) \to V(Y,N')$ in the presence of a compatible system of neighbourhoods $N'$ for $Y$. There is also an induced map of ep-metric spaces $(X,D) \to (Y,D')$, where $D$ and $D'$ are colimits (global averages) of the metrics defined by the respective neighbourhood systems. We prove a stablity result for the restriction of this ep-metric space map to global components. This stability result translates, via excision for path components, to a stability result for global components of the UMAP systems.
\end{abstract}

\nocite{fuzzy-Spivak}
%\nocite{HMc-2018}
%\nocite{Scocc-thesis}
%\nocite{fuzzy-p}
%\nocite{persist-htpy}
%\nocite{metric}

\bigskip 

The main result of \cite{metric} says that if $X$ is a finite extended pseudo-metric space (ep-metric space), then the canonical map
\begin{equation*}
  \eta: V(X)_{s} \to S(X)_{s}
  \end{equation*}
is a weak equivalence for all distance parameters $s$. Here, $V(X)$ is the Vietoris-Rips system and $X \mapsto S(X)$ is the singular functor.

In this paper, we use this result to model the UMAP construction, and we prove a stability result for the resulting hierarchies of clusters.
\medskip

For the general program, we start with sets $N_{x}$ (disjoint from $x$) for each $x \in X$, and distances (or weights) $d_{x}(x,y) \geq 0$ for all $y \in N_{x}$. These distances canonically extend to an ep-metric space structure $(U_{x},D_{x})$ on the set
\begin{equation*}
  U_{x} = \{x\} \sqcup N_{x},
\end{equation*}
  and then to an ep-metric space structure $(X,D_{x})$ on all of $X$, for which $D_{x}(y,z) = \infty$ unless both $y$ and $z$ are in $U_{x}$. The metric space structures $(X,D_{x})$ can be glued together along ep-metric space morphisms $(X,\infty) \to (X,D_{x})$ to produce an ep-metric space
\begin{equation*}
  (X,D) = \vee_{x \in X}\ (X,D_{x}).
\end{equation*}
  Similarly, the Vietoris-Rips systems $V(X,D_{x})$ can be glued together along the maps $X \to V(X,D_{x})$ to produce a system
\begin{equation*}
  V(X,N) = \vee_{x \in X}\ V(X,D_{x}).
\end{equation*}

The notation $V(X,N)$ reflects the fact that this system of spaces depends on the family $N = \{N_{x},\ x \in X\}$ of neighbourhoods, which includes choices of weights $d_{x}$ within each neighbourhood $N_{x}$.

The object $V(X,N)$, for suitable choices of neighbourhoods and weights, gives the various models for the UMAP system.

The original UMAP system $S(X,N)$ of Healy and McInnes \cite{HMc-2018}, is constructed from Spivak's singular functor \cite{fuzzy-Spivak}, \cite{metric}, with
\begin{equation*}
  S(X,N) := \vee_{x \in X}\ S(X,D_{x}).
\end{equation*}
There is a sectionwise weak equivalence $V(X,N) \to S(X,N)$ by the main result of \cite{metric}, and we use the Vietoris-Rips construction $V(X,N)$ since it is more familiar and easier to manipulate.

The choice of neighbourhood sets $N_{x}$ can be arbitrary, but in \cite{HMc-2018} it is the set of $k$-nearest neighbours. The collections of distances $d_{x}(x,y)$ are also arbitrary, but are defined in \cite{HMc-2018}, variously, as the original distance $d_{x}(x,y) = d(x,y)$ or the probability $d_{x}(x,y) = \frac{1}{r_{x}}d(x,y)$, or $d_{x}(x,y) = \frac{1}{r_{x}}(d(x,y) - s_{x})$. Here, $r_{x} = \max_{y \in N_{x}} d(x,y)$ and $s_{x} = \min_{y \in N_{x}} d(x,y)$.

All corresponding constructions $V(X,N)$ are variants of the UMAP construction, and they are easily compared. The shrpest results on the general structure of $V(X,N)$ require the weights $d_{x}(x,y) > 0$ for $y \in N_{x}$, and this is assumed for most of the paper.
\medskip

Suppose given an ep-metric space map $i: (X,d_{X}) \to (Y,d_{Y})$, where the underlying function is an injection, and $X$ and $Y$ are finite. The assumption that $i$ is an ep-metric space morphism means that $i$ compresses distance in the sense that $d_{Y}(i(x),i(y)) \leq d_{X}(x,y)$ for all $x,y \in X$.

We can assume for now that $X$ and $Y$ are metric spaces, and are therefore {\it globally connected} in the sense that $d(x,y) < \infty$ for all $x,y$. In that case, since $X$ is finite, we define the {\it compression factor} $m(i)$ by
\begin{equation*}
m(i) = \max_{x \ne y}\ \frac{d_{X}(x,y)}{d_{Y}(i(x),i(y))}
\end{equation*}
This makes sense because none of the distances in the ratio are either $0$ or $\infty$.

If we further assume that for every $y \in Y$ there is an $x \in X$ such that $d_{Y}(y,x) \leq r$, then the same argument as for the ordinary Rips stability theorem produces a homotopy interleaving
\begin{equation*}
  \xymatrix{
    V(X)_{s} \ar[r]^-{\sigma} \ar[d]_{i} & V(X)_{m(i)(s+2r)} \ar[d]^{i} \\
    V(Y)_{s} \ar[r]_-{\sigma} \ar[ur]^{\theta} & V(Y)_{m(i)(s+2r)}
  }
  \end{equation*}
This statement appears as Proposition \ref{prop 8} in this paper.

Suppose now that $i: X \subset Y$ is an inclusion of finite sets, and we have made choice of neighbourhoods $N_{x},\ x \in X$ and $N_{y},\ y \in Y$. Suppose that \begin{itemize}
\item[1)] the inclusion $i$ induces inclusions $i: N_{x} \subset N_{i(x)}$, and
\item[2)] the weights are chosen such that $d_{x}(x,x') > 0$ for $x' \ne x \in N_{x}$, $d_{y}(y,y') > 0$ for $y' \ne y \in N_{y}$, and $d_{i(x)}(i(x),i(y)) \leq d_{x}(x,y)$ for all $y \in N_{x}$.
  \end{itemize}
The assumptions imply that the inclusion $i$ induces an ep-metric space map $i: (X,D) \to (X,D')$, and the global connected components of both ep-metric spaces are metric spaces. If $E$ is a global connected component of $(X,D)$, then there is a global connected component $F$ of $(Y,D')$ such that $i$ restricts of a ep-metric space morphism $i: (E,D) \to (F,D')$ of metric spaces.

Subject to the assumptions of the last paragraph, it follows from Proposition \ref{prop 8} that, if for every $y \in F$ there is an $x \in E$ such that $d_{Y}(y,i(x)) \leq r$, then there is a homotopy interleaving
\begin{equation}\label{eq 1}
  \xymatrix{
    V(E,D)_{s} \ar[r]^-{\sigma} \ar[d]_{i} & V(E,D)_{m(i)(s+2r)} \ar[d]^{i} \\
    V(F,D')_{s} \ar[r]_-{\sigma} \ar[ur]^{\theta} & V(F,D')_{m(i)(s+2r)}
  }
  \end{equation}

This is a componentwise stability result for the ep-metric space morphism $i: (X,D) \to (Y,D')$, which appears as Theorem \ref{th 9} in this paper. The input for this result involves compatible choices of neighbourhoods and weights within those neighbourhoods, rather than distance.

The canonical ep-metric space maps $(X,D_{x}) \to (X,D)$ induce a map of systems
\begin{equation*}
  \phi: V(X,N) = \vee_{x \in X}\ V(X,D_{x}) \to V(X,D)
\end{equation*}
which is natural with respect to inclusions $i: X \subset Y$ satisfying the conditions above. 

The ep-metric space $(X,D)$ is a disjoint union of its global connected components $E$, and the system $V(X,D)$ is a disjoint union of the systems $V(E,D)$. This splitting defines a disjoint union structure
\begin{equation*}
  V(X,N) = \bigsqcup_{E}\ V(X,N)(E),
\end{equation*}
where $V(X,N)_{E}$ is the pullback of the system $V(E)$ under the map $\phi$.
The induced map
\begin{equation*}
  \phi_{\ast}: \pi_{0}V(X,N)(E) \to \pi_{0}V(E)
\end{equation*}
is an isomorphism of systems of sets, by path component excision (Lemma \ref{lem 2}).

The componentwise stability result displayed in the interleaving (\ref{eq 1}) therefore specializes to interleavings in clusters
\begin{equation}\label{eq 2}
  \xymatrix{
    \pi_{0}V(X,N)(E)_{s} \ar[r]^-{\sigma} \ar[d]_{i} & \pi_{0}V(X,N)(E)_{m(i)(s+2r)} \ar[d]^{i} \\
    \pi_{0}V(Y,N')(F)_{s} \ar[r]_-{\sigma} \ar[ur]^{\theta} & \pi_{0}V(Y,N')(F)_{m(i)(s+2r)}
  }
  \end{equation}
This is a stability result for UMAP, which appears as Theorem \ref{th 10} below. Theorem \ref{th 10} is the main result of this paper.

 \tableofcontents

\section{General constructions}

    Suppose that we have a set $X$ with a finite list of ep-metric space structures $(X,d_{i})$, $i=1, \dots ,k$. We can also endow $X$ with a discrete ep-metric space structure, so that $d_{\infty}(x,y) = \infty$ for all $x,y \in X$. Suppose that $X$ has a total ordering.
\medskip

There are canonical ep-metric space morphisms $(X,d_{\infty}) \to (X,d_{i})$, all of which are the identity on $X$. Write $(X,D)$ for the colimit in $ep-\mathbf{Met}$, giving a diagram 
\begin{equation*}
  \xymatrix{
    (X,d_{\infty}) \ar[r] \ar[d] & (X,d_{i}) \ar@{.>}[d]^{\tau_{i}} \\
    (X,d_{j}) \ar@{.>}[r]_{\tau_{j}} & (X,D)
  }
  \end{equation*}
The maps $\tau_{i}: (X,d_{i}) \to (X,D)$ are the canonical maps into the colimit.  Recall \cite{metric} that the colimit $(X,D)$ is formed by taking the colimit of the underlying functions, and endowing it with a metric, in this case $D$. The colimit of functions, which are identity functions on $X$, is $X$ again, so that the notation $(X,D)$ makes sense.

We also write
\begin{equation*}
  (X,D) = \vee_{i}\ (X,d_{i})
\end{equation*}
to reflect the fact that we are gluing together the ep-metric spaces $(X,d_{i})$ along the underlying set $X$.

Formally,
\begin{equation*}
  D(x,y) = \inf_{P}\ (\sum\ d_{i_{j}}(x_{j},x_{j+1})),
\end{equation*}
indexed over all polygonal paths
\begin{equation*}
  x=x_{0},x_{1}, \dots ,x_{n}=y
\end{equation*}
and choices of metrics $d_{i_{j}}$ in the list $d_{i},\ 1 \leq i \leq k$. The pair $x,y$ forms a polygonal path, so that
\begin{equation*}
  D(x,y) \leq d_{i}(x,y)
\end{equation*}
for all $i$. In this sense, the ep-metric $D$ optimizes the metrics $d_{i}$.

There may not be a polygonal path $P$ and metrics $d_{i_{j}}$ such that all $d_{i_{j}}(x_{i},x_{i+1})$ are finite. In that case, we have $D(x,y) = \infty$.
\medskip

If $X$ is a finite set, then the collection of polygonal paths from $x$ to $y$ in $X$ is finite, and so
\begin{equation*}
  D(x,y) = \sum\ d_{i_{j}}(x_{j},x_{j+1})
\end{equation*}
for some choice of polygonal path $P$ and metrics $d_{i_{j}}$. In that case, $d_{i_{j}}(x_{j},x_{j+1})$ must be minimal among all $d_{k}(x_{j},x_{j+1})$.
\medskip

The maps $(X,d_{\infty}) \to (X,d_{i})$ induce maps $X \to V(X,d_{i})$ into Vietoris-Rips systems, and we form the iterated pushout
\begin{equation}\label{eq 3}
 V(X,D) := \vee_{i}\ V(X,d_{i})
\end{equation}
in the category of systems. This means that the object (\ref{eq 3}) is the colimit of all maps
\begin{equation}\label{eq 4}
  X \to V(X,d_{i}),
\end{equation}
over the discrete system $X$. The maps (\ref{eq 4}) are sectionwise monomorphisms, so the object $V(X,D)$ is a type of homotopy colimit.
\medskip

\begin{remark}
  In practice and in general, although one tends to be notationally lazy, it is better to replace the Vietoris-Rips system $s \mapsto V_{s}(X)$ with the homotopy equivalent system $s \mapsto BP_{s}(X)$, where $P_{s}(X)$ is the poset of non-degenerate simplices of $V_{s}(X)$, and $BP_{s}(X)$ is the nerve of $P_{s}(X)$. The poset $P_{s}(X)$ can be described explicitly as the collection of subsets $\sigma$ of $X$ such that $d(x,y) \leq s$ for all $x,y \in \sigma$. The structure of the poset $P_{s}(X)$ does not depend on an ordering of the set $X$.

The ep-metric space maps $(X,d_{i}) \to (X,D)$ induce commutative diagrams
\begin{equation*}
  \xymatrix{
    BP(X,d_{i}) \ar[r] \ar[d]_{\gamma}^{\simeq} & BP(X,D) \ar[d]^{\gamma_{\ast}} \\
    V(X,d_{i}) \ar[r] & V(X,D)
  }
  \end{equation*}
of maps of systems, where the map $\gamma$ is a sectionwise weak equivalence defined by subdivision, and the induced map $\gamma_{\ast}$ is a sectionwise weak equivalence arising from the displayed comparison of homotopy colimits.

From this perspective, we can write 
\begin{equation*}
  V(X,D) = BP(X,D) = \vee_{i}\ BP(X,d_{i}) = \vee_{i}\ V(X,d_{i})
\end{equation*}
as sectionwise homotopy types.
\end{remark}

\begin{lemma}[Excision]\label{lem 2}
  Suppose that $X$ is a finite set, with a finite collection of ep-metric structures $d_{i}$.
  
  Then the canononical map
  \begin{equation*}
   \phi: \vee_{i}\ V(X,d_{i}) \to V(X,D)
  \end{equation*}
  induces bijections
\begin{equation*}
  \phi_{\ast}:  \pi_{0}(\vee_{i}\ V(X,d_{i}))_{s} \xrightarrow{\cong}
  \pi_{0}V(X,D)_{s}
  \end{equation*}
  for all $s$.
\end{lemma}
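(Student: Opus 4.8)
The plan is to reduce the statement about $\pi_0$ of an iterated pushout to a statement about path components of each piece $V(X,d_i)_s$ separately, by exploiting that all the gluings occur along the discrete system $X$ sitting inside each $V(X,d_i)_s$ as the $0$-skeleton. First I would recall the explicit description: $V(X,D)_s = \vee_i V(X,d_i)_s$ is the colimit of the diagram with one copy of the discrete simplicial set $X$ and one copy of $V(X,d_i)_s$ for each $i$, with all structure maps the canonical inclusions $X \to V(X,d_i)_s$. Since $\pi_0$ commutes with colimits (it is a left adjoint), $\pi_0$ of this colimit is the colimit of the corresponding diagram of sets, i.e.\ the quotient of $\bigsqcup_i \pi_0 V(X,d_i)_s$ (amalgamated over the $X$'s) by the equivalence relation generated by: $a \sim b$ if there is a point $x \in X$ mapping into the component $a$ in one $V(X,d_i)_s$ and into the component $b$ in another $V(X,d_j)_s$. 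Concretely, $\pi_0(\vee_i V(X,d_i))_s$ is the quotient of the set $X$ (the common $0$-skeleton) by the equivalence relation $R$ generated by all the relations $x \sim_i y$ coming from $d_i(x,y) \le s$, since every component of every $V(X,d_i)_s$ contains a vertex.

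Next I would compute $\pi_0 V(X,D)_s$ directly. A vertex $x$ and a vertex $y$ lie in the same component of $V(X,D)_s$ exactly when there is a path of edges, i.e.\ a sequence $x = x_0, x_1, \dots, x_n = y$ with $D(x_j, x_{j+1}) \le s$ for each $j$. Using the formula for $D$ as an infimum over polygonal paths with a choice of metric at each leg — and the fact that $X$ finite makes this infimum a minimum realized by an actual polygonal path with $d_{i_\ell}(z_\ell, z_{\ell+1})$ minimal at each leg — the condition $D(x_j,x_{j+1}) \le s$ means there is a finite chain from $x_j$ to $x_{j+1}$ along which each consecutive pair has $d_{i_\ell}$-distance $\le s$ for some index $i_\ell$ (here I use that $D(x_j,x_{j+1}) \le s$ forces each term in the realizing sum to be $\le s$, as all $d_{i}$ are nonnegative). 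Concatenating, $x$ and $y$ are in the same component of $V(X,D)_s$ iff they are related by the same generated equivalence relation $R$ described above. Hence both $\pi_0$-sets are canonically $X/R$, and one checks the map $\phi_\ast$ is exactly this identification, so it is a bijection.

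To organize this cleanly I would state the comparison as: $\phi_\ast$ is surjective because every component of $V(X,D)_s$ contains a vertex (the $0$-skeleton of $V(X,D)_s$ is again $X$, and every nonempty simplicial set component meets its $0$-skeleton), and that vertex certainly lies in the image of some $V(X,d_i)_s$; and $\phi_\ast$ is injective by showing that if $\phi$ sends $x$ and $y$ into the same component of $V(X,D)_s$ then $x,y$ are already identified in $\pi_0(\vee_i V(X,d_i))_s$, which is the chain argument of the previous paragraph run in reverse — an edge $D(z,z') \le s$ of $V(X,D)_s$ lifts, via the realizing polygonal path and minimality of each leg, to a zig-zag of edges in the various $V(X,d_i)_s$ joined at common vertices.

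The main obstacle is the injectivity step, and specifically making rigorous that an edge of $V(X,D)_s$ decomposes into edges of the constituent $V(X,d_i)_s$: this is exactly where the finiteness of $X$ is essential (so that $D(z,z')$ is attained by a genuine finite polygonal path rather than only approached in the limit) and where one needs the elementary observation that a finite sum of nonnegative reals is $\le s$ only if each summand is $\le s$. Everything else — $\pi_0$ preserving colimits, the $0$-skeleton of an iterated pushout along discrete sets being the corresponding pushout of $0$-skeleta, and surjectivity — is formal. I would also remark that, as in the excerpt's Remark, one may replace $V(X,d_i)$ by $BP(X,d_i)$ throughout without changing $\pi_0$, which can make the combinatorics of "edge" versus "$1$-simplex" slightly cleaner, though it is not necessary.
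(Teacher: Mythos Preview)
Your proposal is correct and follows essentially the same route as the paper: surjectivity because $\phi$ is the identity on vertices, and injectivity by observing that finiteness of $X$ makes the infimum defining $D(x,y)$ an actual minimum realized by a polygonal path, each leg of which has $d_{i_j}$-length $\le s$ and hence gives a $1$-simplex in some $V(X,d_{i_j})_s$. Your additional categorical framing ($\pi_0$ as a left adjoint, both sides as $X/R$) and the remark about $BP$ are correct but not needed---the paper's proof is the bare version of your injectivity paragraph.
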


\begin{proof}
  The map $\phi$ is the identity on vertices, so that $\phi_{\ast}$ is surjective.

  Suppose that $D(x,y) \leq s$ in $(X,D)$. There is a polygonal path
  \begin{equation*}
    P: x=x_{0},x_{1}, \dots ,x_{n}=y
  \end{equation*}
  and metrics $d_{i_{j}}$ such that
  \begin{equation*}
    D(x,y) = \sum_{j}\ d_{i_{j}}(x_{j},x_{j+1}) \leq s,
  \end{equation*}
  since $X$ is finite.
  This means that $d_{i_{j}}(x_{j},x_{j+1}) \leq s$ for all $j$, and so there are $1$-simplices $(x_{j},x_{j+1})$ in $V(X,d_{i_{j}})_{s}$ which together describe a path from $x$ to $y$ in $\vee_{X}\ V_{s}(X,d_{i})$.

  It follows that, if $x,y$ are in the same path component of $V(X,D)_{s}$, then $x,y$ are in the same path component of $\vee_{i} V(X,d_{i})_{s}$. 
  \end{proof}

\section{UMAP}

    The UMAP algorithm of \cite{HMc-2018} starts with a finite metric space $X$. We assume that $X$ has a total ordering.

For each point $x \in X$ one finds the list
\begin{equation*}
  N_{x} := \{ x_{1}, \dots ,x_{k}\}
\end{equation*}
of distinct $k$-nearest neighbours with $x_{i} \ne x$, with maximum distance $r_{x} = \max_{i}\ d(x,x_{i})$.

The set $N_{x}$ is the set of {\it neighbours} of $x$.
\medskip

In much of what follows, the choices of the sets $N_{x}$ can be quite arbitrary.
 In all applications, one assigns distances $d_{x}(x,y)$ for all neighbours $y \in N_{x}$, and then one extends functorially to an ep-metric $D_{x}$ on $X$. This is done for all $x \in X$.
 \medskip

 \noindent
 {\bf Examples}:\
    Possibilities for $d_{x}(x,y)$ include $\frac{1}{r_{x}}d(x,y)$, $\frac{1}{r_{x}}(d(x,) - \eta_{x})$ where $\eta_{x}$ is the distance from $x$ to a nearest neighbour. We can also use the ambient metric $d_{x}(x,y)=d(x,y)$ from $X$.
    \medskip

\begin{remark}\label{rem 3}    
Explicitly, given $x \in X$ we find a set (and a listing) $N_{x}=\{ x_{1}, \dots ,x_{k}\}$ of $k$-nearest neighbours, by finding an element $x_{1}$ (in the total order) such that $d(x,x_{1})$ is minimal ($x_{1}$ is a nearest neighbour).  Then $x_{2} \in X - \{x,x_{1}\}$ is chosen such that $d(x,x_{2})$ is minimal and $x_{2}$ is the first element in the total order that has this property, and so on.

The algorithm is set up such that the sublist $\{x_{i},x_{i+1}, \dots ,x_{k}\}$ of elements having $d(x,x_{j}) = r$ has $x_{i} < x_{i+1} < \dots < x_{k}$ in the total order.
\end{remark}

\noindent
{\bf Assumptions}:\
Suppose that $X$ is a finite set. Suppose given a system of neighbourhoods $N_{x}$ for $x \in X$, and define distances $d_{x}(x,y) > 0$ for each $y \in N_{x}$.
\medskip

One defines an ep-metric $D_{x}$ first on the set
\begin{equation*}
  U_{x}= \{x\} \sqcup N_{x}
\end{equation*}
and then one extends to all of $X$ with the decomposition
\begin{equation}\label{eq 5}
  X = U_{x} \sqcup (\bigsqcup_{y \in X-U_{x}}\ \{y\}).
\end{equation}

The ep-metric space structure on the set $U_{x}$ is given by the wedge
\begin{equation*}
  (U_{x},D_{x}) = \vee_{y \in N_{x}}\ (\{ x,y \},d_{x})
\end{equation*}
over $x$ of the $2$-element metric spaces $(\{x,y\},d_{x})$, in the category of ep-metric spaces. The metric $D_{x}$ on $U_{x}$ has the property that $D_{x}(x,y) = d_{x}(x,y)$ for $y \in N_{x}$. The triangle inequality forces
\begin{equation*}
  D_{x}(y,z) \leq d_{x}(y,x) + d_{x}(x,z)
\end{equation*}
for $y \ne z$ in $N_{x}$.
At the same time, the sum $d_{x}(y,x) + d_{x}(x,z)$ is the length of the shortest polygonal path $(y,x,z)$ between $y$ and $z$ in $U_{x}$, and so it follows that
\begin{equation*}
  D_{x}(y,z) = d_{x}(y,x) + d_{x}(x,z)
\end{equation*}
for $y \ne z$ in $N_{x}$.

Use the decomposition (\ref{eq 5}) to extend $D_{x}$ to an ep-metric on all of $X$. This forces $D_{x}(y,z) = \infty$ unless $y$ and $z$ are both in $U_{x}$. 
\medskip

Define systems of simplicial sets $V(X,N)$ and $S(X,N)$ by setting
\begin{equation*}
  V(X,N) = \vee_{x \in X} V(X,D_{x})
\end{equation*}
and
\begin{equation*}
 S(X,N) = \vee_{x \in X} S(X,D_{x}),
\end{equation*}
respectively. Here, $V(X,N)$ is the iterated pushout of the cofibrations $X \to V(X,D_{x})$, where the set $X$ is identified with a constant, discrete system. Similarly, $S(X,N)$ is the iterated pushout of the cofibrations $X \to S(X,D_{x})$.

The maps $\eta: V(X,D_{x}) \to S(X,D_{x})$ are sectionwise weak equivalences by \cite{metric}, and therefore induce a sectionwise weak equivalence
\begin{equation}
  \eta: V(X,N) \to S(X,N)
\end{equation}
by comparison of iterated pushouts (or homotopy colimits).
\medskip

Spivak's realization construction $\Re$ preserves colimits,  and there is a natural isomorphism $\Re(V(X,D_{x})) \cong (X,D_{x})$ (see \cite{metric}), so that the realization
\begin{equation*}
  \Re(V(X,D)) \cong \vee_{X}\ (X,D_{x}) = (X,D)
  \end{equation*}
is the iterated pushout of the maps $X \to (X,D_{x})$ in the ep-metric space category, as in the first section. 
\medskip

\begin{remark}\label{rem 4}
  The set $X$ is finite. The distances $d_{x}$ have the property that $d_{x}(x,y) > 0$ for all $y \in N_{x}$, $x \in X$, and one can show that $D(u,v) = 0$ in $(X,D)$ forces $u=v$.

  In effect, $D(u,v)$ is a sum
  \begin{equation*}
    D(u,v) = \sum D_{z_{i}}(x_{i},x_{i+1})
  \end{equation*}
  which is defined by a particular polygonal path $P: u=x_{0}, \dots ,x_{n}=v$ (since there are only finitely many such paths).
  Then $D(u,v) = 0$ forces all
\begin{equation*}
  D_{z_{i}}(x_{i},x_{i+1}) = d_{z_{i}}(x_{i},z_{i}) + d_{z_{i}}(z_{i},x_{i+1})
\end{equation*}
to be $0$, so that $x_{i} = z_{i} = x_{i+1}$ for all $i$, and $u=v$.
  \end{remark}

\begin{remark}
  It is time for a homotopy theory interlude.
  
Suppose that each map
\begin{equation*}
  V = \{0,1,\dots ,n\} \subset \Delta^{n} = X_{i},\ i \geq 0,
\end{equation*}
is the inclusion of the set of vertices $V$ of the standard $n$-simplex $\Delta^{n}$, and let $Y_{k} = X_{0} \cup \dots \cup X_{k}$ be an iterated pushout of $n$-simplices over the common vertex set $V$.

There is a pushout diagram
\begin{equation*}
  \xymatrix{
    V \ar[r] \ar[d] & X_{1} \ar[d] \\
    X_{0} \ar[r] & X_{0} \cup_{V} X_{1}
  }
  \end{equation*}
in which both $X_{0}$ and $X_{1}$ are contractible. It follows that $X_{0} \cup_{V} X_{1}$ has the homotopy type of the suspension $X_{1}/V \simeq \Sigma V$ for a suitable choice of base point of the discrete set $V$ --- choose $0$. Then $V = \{0,1\} \vee \{0, 2\} \vee \dots \vee \{0,n\}$
is a wedge of $n$ copies of $S^{0}$, and $\Sigma V$ is a wedge of $n$ copies of $\Sigma S^{0} = S^{1}$. Thus,
\begin{equation*}
  Y_{1} = X_{0} \cup_{V} X_{1} \simeq S^{1} \vee \dots \vee S^{1}\enskip \text{($n$ summands)}.
\end{equation*}

More generally, consider the space $Y_{k} = X_{0} \cup_{V} X_{1} \cup_{V} \dots \cup_{V} X_{k}$. Collapsing the contractible space $X_{0}$ to a point gives
\begin{equation*}
  Y_{k} \simeq (X_{1}/V) \vee (X_{2}/V) \vee \dots \vee (X_{k}/V).
\end{equation*}
Each $X_{i}/V$ is an $n$-fold wedge of circles, by the above, so that $Y_{k}$ is a $k\cdot n$-fold wedge of circles. 
\medskip

Suppose that the finite set $X$ has $M+1$ elements. Then $V(X,D)_{\infty}$ is an iterated pushout of the maps $X \subset V(X,D_{x})_{\infty}$. Each $V(X,D_{x})_{\infty}$ is a copy of the $M$-simplex $\Delta^{M}$, and each map $X \subset V(X,D_{x})_{\infty}$ is a copy of the inclusion of vertices $\mathbf{M} \subset \Delta^{M}$.

It follows that $V(X,D)_{\infty}$ is a large wedge of circles. Explicitly, there is a weak equivalence
\begin{equation*}
  V(X,D)_{\infty} \simeq \vee_{M^{2}}\ S^{1}.
\end{equation*}
This space is path connected.

The system of path component sets
\begin{equation*}
  s \mapsto \pi_{0}V(X,d)_{s}
\end{equation*}
therefore describes a hierarchy, as in the standard algorithms of topological data analyis. 
\end{remark}

\section{Stability}

Suppose that $(X,d)$ is an ep-metric space, and that $x \in X$. The {\it global connected component} of $x$ is the collection of $y \in X$ such that $d(x,y) < \infty$. Say that $X$ is {\it globally connected} if $d(x,y) < \infty$ for all $x,y \in X$.
\medskip

  Global connectedness has the following general properties:
  \begin{itemize}
    \item[1)]
  Every ep-metric space $(X,d)$ is a disjoint union of its set $\pi_{\infty}(X,d)$ of global components. 
\item[2)]
  An ep-metric space morphism $f:(X,d_{X}) \to (Y,d_{Y})$ preserves global connected components: if $d_{X}(x,y) < \infty$ then
\begin{equation*}
  d_{Y}(f(x),f(y)) \leq d_{X}(x,y) < \infty,
\end{equation*}
  so that $f(y)$ is in the connected component of $f(x)$. We therefore have an induced function $f_{\ast}: \pi_{\infty}(X,d_{X}) \to \pi_{\infty}(Y,d_{Y})$.
\item[3)]
  Every metric space is globally connected.
  \end{itemize}
%%%%%%%%%%%%

  \begin{example}\label{ex 6}
     Suppose that $X$ is a finite set with a system of neighbourhoods $N_{x}$ and associated distances $d_{x}$ for all $x \in X$ as in the list of Assumptions above, with the resulting ep-metric space $(X,D)$.

      The ep-metric space $(X,D)$ has the property that $D(x,y) = 0$ forces $x=y$, by Remark \ref{rem 4}. It follows that the global connected components of the ep-metric space $(X,D)$ are metric spaces.
 
Say that a pair of elements $(x,y)$ of $X$ is a {\it neighbourhood pair} if $x \in N_{y}$ or $y \in N_{x}$. The argument of Remark \ref{rem 4} shows that elements $u$ and $v$ of $(X,D)$ are in the same global connected component if and only if there is a polygonal path
\begin{equation*}
  P: u=x_{0},x_{1}, \dots ,x_{n}=v
\end{equation*}
such that each pair $(x_{i},x_{i+1})$ is a neighbourhood pair.
\end{example}

  Suppose that $(X,d_{X})$ and $(Y,d_{Y})$ are finite metric spaces, and that there is a monomorphism $i: X \subset Y$ that defines a map of ep-metric spaces, so that $d_{Y}(x,y) \leq d_{X}(x,y)$ for all $x,y \in X$. Set
  \begin{equation*}
    m(i) = \max_{x \ne y \in X}\ \{\frac{d_{X}(x,y)}{d_{Y}(i(x),i(y))}\}.
    \end{equation*}
  Then $1 \leq m(i) < \infty$ since $X$ is finite.

  The number $m(i)$ is the {\it compression factor} for the monomorphism $i$.

  \begin{example}\label{ex 7}
      Suppose that $i: X \subset Y$ is an inclusion of finite sets, and choose systems of neighbourhoods $N_{x}$, $x \in X$ and $N'_{y}$, $y \in Y$, with distances $d_{x}$ and $d'_{y}$. Suppose that $N_{x} \subset N'_{i(x)}$ for all $x \in X$, and that
        \begin{equation}\label{eq 7}
          d_{i(x)}'(i(x),i(z)) \leq d_{x}(x,z)
\end{equation}
          for all $z \in N_{x}$, and for all $x \in X$.

          Then the inclusions $i: N_{x} \subset N'_{i(x)}$ and the relations (\ref{eq 7}) define system morphisms $V(X,D_{x}) \to V(Y,D'_{i(x)})$ and $V(X,D) \to V(Y,D')$, as well as ep-metric space morphisms $(X,D) \to (Y,D')$.

          The ep-metric space map $(X,D) \to (Y,D')$ is the realization of the system morphism $V(X,D) \to V(Y,D')$.

        The ep-metric space morphism $(X,D) \to (Y,D')$ preserves global connected components, and the global connected components of $(X,D)$ and $(Y,D')$ are finite metric spaces.
  \end{example}

  For the following, recall that if $(X,d)$ is an ep-metric space, then $P_{s}(X,d)$ is the poset of subsets $\sigma$ of $X$ such that $d(x,y) \leq s$ for all $x,y \in \sigma$. Recall further that the nerve $BP_{s}(X,d)$ is the barycentric subdivision of the Vietoris-Rips complex $V(X,d)_{s}$, so that the systems $BP(X,d)$ and $V(X,d)$ are naturally sectionwise homotopy equivalent.
  
  \begin{proposition}\label{prop 8}
  Suppose that $i:X \subset Y$ is an inclusion of finite sets. Suppose that $X$ and $Y$ have metric space structures such that $i$ defines a morphism $i: (X,d_{X}) \to (Y,d_{Y})$ of ep-metric spaces. Suppose that for esvery $y \in Y$ there is an $x \in X$ such that $d_{Y}(y,i(x)) < r$ in $Y$.

  Then there are diagrams of poset morphisms
  \begin{equation*}
    \xymatrix{
      P_{s}(X,d_{X}) \ar[r]^-{\sigma} \ar[d]_{i} & P_{m(i)\cdot( s +2r)}(X,d_{X}) \ar[d]^{i} \\
      P_{s}(Y,d_{Y}) \ar[r]_-{\sigma} \ar[ur]^{\theta} & P_{m(i) \cdot (s +2r)}(Y,d_{Y})
    }
    \end{equation*}
for all $0 \leq s < \infty$, in which the upper triangle commutes and the lower triangle homotopy commutes rel $P_{s}(X,d_{X})$.
 \end{proposition}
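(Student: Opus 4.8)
The plan is to imitate the proof of the classical Vietoris--Rips stability theorem: produce an explicit ``nearest point'' poset map $\theta$ that splits $i$ on the nose, and then verify the two triangle conditions by triangle-inequality estimates together with the standard fact that an inclusion of poset maps induces a homotopy of nerves, constant wherever the inclusion is an equality. First I fix, once and for all and independently of $s$, a function $\theta\colon Y \to X$ with $\theta(i(x)) = x$ for all $x \in X$ and $d_{Y}(y, i(\theta(y))) < r$ for all $y \in Y$. Such a $\theta$ exists: on $i(X)$ injectivity of $i$ forces the assignment $i(x) \mapsto x$, which is admissible since $d_{Y}(i(x),i(x)) = 0 < r$ (so the hypothesis already forces $r > 0$); off $i(X)$ one chooses, for each $y$, a point $x \in X$ with $d_{Y}(y,i(x)) < r$. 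Extend $\theta$ to subsets by $\theta(\tau) = \{\theta(y) : y \in \tau\}$; this, like every map in the diagram, is order preserving, and $\sigma$ is the inclusion since $s \leq m(i)(s+2r)$.

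Next I check that $\theta$ carries $P_{s}(Y,d_{Y})$ into $P_{m(i)(s+2r)}(X,d_{X})$. If $\tau \in P_{s}(Y,d_{Y})$ and $y,y' \in \tau$, then
\begin{equation*}
  d_{Y}(i\theta(y), i\theta(y')) \leq d_{Y}(i\theta(y),y) + d_{Y}(y,y') + d_{Y}(y', i\theta(y')) < s + 2r,
\end{equation*}
so by definition of the compression factor $d_{X}(\theta(y),\theta(y')) \leq m(i)\, d_{Y}(i\theta(y),i\theta(y')) < m(i)(s+2r)$ (trivially when $\theta(y)=\theta(y')$), whence $\theta(\tau) \in P_{m(i)(s+2r)}(X,d_{X})$. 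Since $i$ is distance decreasing it carries $P_{s}$ to $P_{s}$ and $P_{m(i)(s+2r)}$ to $P_{m(i)(s+2r)}$. The upper triangle then commutes on the nose: $\theta i$ sends a subset $\omega \subseteq X$ to $\{\theta(i(x)) : x \in \omega\} = \omega = \sigma(\omega)$.

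For the lower triangle I use the auxiliary poset map $\tau \mapsto \tau \cup i\theta(\tau)$ from $P_{s}(Y,d_{Y})$ to $P_{m(i)(s+2r)}(Y,d_{Y})$, which is well defined because, besides the estimates above, one has $d_{Y}(y, i\theta(y')) \leq d_{Y}(y,y') + d_{Y}(y', i\theta(y')) < s + r \leq m(i)(s+2r)$ for $y,y' \in \tau$. The containments $\tau \subseteq \tau \cup i\theta(\tau) \supseteq i\theta(\tau)$ are natural transformations among the poset maps $\sigma$, $\tau \mapsto \tau \cup i\theta(\tau)$, and $i\theta$, so on nerves they give a homotopy $\sigma \simeq i\theta$ in $BP_{m(i)(s+2r)}(Y,d_{Y})$. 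Along $i\colon P_{s}(X,d_{X}) \hookrightarrow P_{s}(Y,d_{Y})$ all three maps coincide (since $\theta(i(x)) = x$ forces $i\theta i(\omega) = i(\omega)$), so these natural transformations are identities there and the homotopy is rel $P_{s}(X,d_{X})$. Finally, $\theta$ having been chosen once and for all, the whole diagram is also natural in $s$.

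The only delicate point is the construction of $\theta$: it must be simultaneously a genuine $r$-approximation and a strict section of $i$, which is exactly what makes the $r$-density hypothesis usable and closes both triangles --- the upper one on the nose, the lower one rel the subposet. Everything else is the triangle-inequality bookkeeping above together with the elementary homotopy-of-nerves principle.
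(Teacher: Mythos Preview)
Your proof is correct and follows essentially the same route as the paper's: the same section $\theta$ fixing $X$ pointwise, the same triangle-inequality estimate combined with the compression factor to land $\theta(\tau)$ in $P_{m(i)(s+2r)}(X,d_{X})$, and the same zig-zag $\tau \subseteq \tau\cup i\theta(\tau)\supseteq i\theta(\tau)$ for the homotopy. If anything, you are more careful than the paper in verifying that $\tau\cup i\theta(\tau)$ really lies in $P_{m(i)(s+2r)}(Y,d_{Y})$ and in spelling out why the homotopy is rel $P_{s}(X,d_{X})$.
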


 \begin{proof}
   Define a function $\theta:Y \to X$ by setting $\theta(x) = x$ for $x \in X$, and by choosing $\theta(y)$ such that $d_{Y}(y,i(\theta(y))) < r$ for $y$ outside of $X$.

   Then 
   \begin{equation*}
     \begin{aligned}
       d_{Y}(i(\theta(y_{1})),i(\theta(y_{2}))) &\leq d_{Y}(i(\theta(y_{1})),y_{1}) + d_{Y}(y_{1},y_{2}) + d_{Y}(y_{2},i(\theta(y_{2}))) \\
       &< d_{Y}(y_{1},y_{2}) +2r,
\end{aligned}
     \end{equation*}
and it follows that
\begin{equation*}
d_{X}(\theta(y_{1}),\theta(y_{2})) \leq m(i)\cdot (d_{Y}(y_{1},y_{2}) +2r).
\end{equation*}

If $\sigma = \{y_{1}, \dots ,y_{n}\}$ is a subset of $Y$ such that $d(y_{j},y_{k}) \leq s$ for all $j,k$, then $\theta(\sigma) = \{\theta(y_{1}), \dots ,\theta(y_{n})\}$ has $d(\theta(y_{j}),\theta(y_{k})) \leq m(i) \cdot (s +2r)$ for all $j,k$.

The subset $\sigma \cup i(\theta(\sigma))$ of $Y$ has distance between any two elements bounded above by $m(i) \cdot (s+2r)$. The natural inclusions
\begin{equation*}
  \sigma \subset \sigma \cup i(\theta(\sigma)) \supset i(\theta(\sigma))
\end{equation*}
  define the required homotopies.
   \end{proof}

     As in Example \ref{ex 7}, suppose that $i: X \subset Y$ is an inclusion of finite sets, and choose systems of neighbourhoods $N_{x}$, $x \in X$ and $N'_{y}$, $y \in Y$, with distances $d_{x}$ and $d'_{y}$. Suppose that $N_{x} \subset N'_{i(x)}$ for all $x \in X$, and that
        \begin{equation*}
          0 \ne d_{i(x)}'(i(x),i(z)) \leq d_{x}(x,z)
\end{equation*}
          for all $z \in N_{x}$, for all $x \in X$. Form the corresponding ep-metric space morphism $i: (X,D) \to (Y,D')$.

          Suppose that $E$ is a global connected component of $(X,D)$ and that $F$ is a global connected component of $(Y,D')$ such that $i(E) \subset F$. Consider the restriction of the ep-metric space morphism $i: (X,D) \subset (Y,D')$ to the ep-metric space morphism $i: (E,D) \to (F,D')$. Suppose that $m(i)$ is the compression factor for the map $i$ of global components.

          The objects $(E,D)$ and $(F,D')$ are metric spaces, by the choices of all weights $d_{x}$ and $d'_{y}$ --- see Example \ref{ex 6}.

The following result is a corollary of Proposition \ref{prop 8}.

\begin{theorem}\label{th 9}
  Suppose that the map $i: (E,D) \to (F,D')$ is the ep-metric space morphism between metric spaces that is described above.
  Suppose that for every $y \in F$ there is an $x \in E$ such that $D'(y,i(x)) < r$. 
 Then there are diagrams
  \begin{equation*}
    \xymatrix{
      P_{s}(E,D) \ar[r]^-{\sigma} \ar[d]_{i} & P_{m(i)\cdot( s +2r)}(E,D) \ar[d]^{i} \\
      P_{s}(F,D') \ar[r]_-{\sigma} \ar[ur]^{\theta} & P_{m(i) \cdot (s +2r)}(F,D')
    }
    \end{equation*}
for all $0 \leq s < \infty$, in which the upper triangle commutes and the lower triangle homotopy commutes rel $P_{s}(E)$.
\end{theorem}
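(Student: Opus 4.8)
The plan is to deduce Theorem~\ref{th 9} directly from Proposition~\ref{prop 8} by checking that the hypotheses of the Proposition are met for the inclusion $i: (E,D) \to (F,D')$ of finite metric spaces. The key observation is that $(E,D)$ and $(F,D')$ are genuine metric spaces — this is exactly the content of Example~\ref{ex 6}, since the positivity assumption $d_{i(x)}'(i(x),i(z)) \ne 0$ (together with $d_{x}(x,z) > 0$) forces $D$ and $D'$ to vanish only on the diagonal on each global component, by the argument of Remark~\ref{rem 4}. Both $E$ and $F$ are finite, being subsets of the finite sets $X$ and $Y$, so the compression factor $m(i) = \max_{x \ne y \in E} D(x,y)/D'(i(x),i(y))$ is well-defined and lies in $[1,\infty)$; none of the distances involved is $0$ or $\infty$ because we are working within single global components of metric spaces.

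The second point to verify is that $i: (E,D) \to (F,D')$ is itself an ep-metric space morphism, i.e.\ that $D'(i(x),i(y)) \leq D(x,y)$ for all $x,y \in E$. This follows because $i: (X,D) \to (Y,D')$ is an ep-metric space morphism by Example~\ref{ex 7} (the inclusions $N_{x} \subset N'_{i(x)}$ and the inequalities on weights induce it), and restriction of an ep-metric morphism to a global connected component is again an ep-metric morphism with the restricted metrics — this is property~(2) of global connectedness listed just before Example~\ref{ex 6}. With the covering hypothesis "for every $y \in F$ there is an $x \in E$ with $D'(y,i(x)) < r$" taken as given in the statement, all three hypotheses of Proposition~\ref{prop 8} are satisfied, with $(X,d_X)$ replaced by $(E,D)$, $(Y,d_Y)$ replaced by $(F,D')$, and $r$ the same. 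Applying Proposition~\ref{prop 8} then yields verbatim the desired diagram of poset morphisms, with $\sigma$ the structural inclusions $P_{s} \to P_{m(i)(s+2r)}$, the map $\theta$ induced by a retraction $F \to E$ of the vertex sets, the upper triangle commuting on the nose, and the lower triangle commuting up to homotopy rel $P_{s}(E)$ via the zig-zag $\sigma \subset \sigma \cup i(\theta(\sigma)) \supset i(\theta(\sigma))$.

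Since this is essentially a bookkeeping argument, I do not expect a serious obstacle; the only place demanding care is confirming that the restriction of $i$ to $(E,D) \to (F,D')$ really uses the \emph{induced} metrics (restrictions of $D$ and $D'$) rather than some other metrics, so that $m(i)$ computed on the component agrees with the quantity appearing in the conclusion — but this is automatic since a global component is a full sub-ep-metric-space. A minor wrinkle is that Proposition~\ref{prop 8} is stated for an inclusion of finite sets with metric structures, so one should simply remark that $(E,D)$ and $(F,D')$ are finite metric spaces and $i|_E$ is an injection onto a subset of $F$; the inequality $D'(y,i(x)) < r$ in the hypothesis is the exact analogue of $d_Y(y,i(x)) < r$, and $\theta$ is extended by the identity on $E$ exactly as in the proof of Proposition~\ref{prop 8}. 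Hence the proof is a single sentence invoking the Proposition once the setup has been recorded.

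\begin{proof}
The ep-metric spaces $(E,D)$ and $(F,D')$ are finite metric spaces: finiteness is inherited from $X$ and $Y$, and by Example~\ref{ex 6} the positivity of the weights forces $D$ and $D'$ to vanish only on the diagonal of each global connected component. The inclusion $i: X \subset Y$ induces an ep-metric space morphism $(X,D) \to (Y,D')$ by Example~\ref{ex 7}, and its restriction to the global connected component $E$, landing in $F$, is again an ep-metric space morphism $i:(E,D)\to(F,D')$ for the restricted (equivalently, induced) metrics, by property~(2) of global connectedness. Thus $D'(i(x),i(y)) \leq D(x,y)$ for all $x,y \in E$, the compression factor $m(i) = \max_{x \ne y \in E} D(x,y)/D'(i(x),i(y))$ satisfies $1 \leq m(i) < \infty$, and the covering hypothesis $D'(y,i(x)) < r$ for suitable $x \in E$ is exactly the hypothesis needed. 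The result now follows by applying Proposition~\ref{prop 8} to the inclusion $i:(E,D)\to(F,D')$ of finite metric spaces.
\end{proof}
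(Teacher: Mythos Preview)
Your proposal is correct and matches the paper's own treatment: the paper simply declares Theorem~\ref{th 9} to be a corollary of Proposition~\ref{prop 8}, and the surrounding text (Examples~\ref{ex 6} and~\ref{ex 7}) supplies exactly the verifications you spell out. Your write-up just makes explicit the hypothesis-checking that the paper leaves to the reader.
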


The canonical map
\begin{equation*}
  \phi: V(X,N) = \vee_{x}\ V(X,D_{x}) \to V(X,D),
\end{equation*}
is induced by the ep-metric space maps $(X,D_{x}) \to (X,D)$.

The ep-metric space $(X,D)$ is a disjoint union of its global connected components $E$, and the system $V(X,D)$ is a disjoint union of the systems $V(E,D)$. Form the pullback diagram
\begin{equation*}
  \xymatrix{
    V(X,N)(E) \ar[r] \ar[d]_{\phi} & V(X,N) \ar[d]^{\phi} \\
    V(E,D) \ar[r] & V(X,D)
  }
  \end{equation*}
The disjoint union
\begin{equation*}
  V(X,D) = \bigsqcup_{E \in \pi_{\infty}(X,D)}\ V(E,D)
\end{equation*}
  pulls back to a disjoint union structure
\begin{equation*}
  V(X,N) = \bigsqcup_{E \in \pi_{\infty}(X,D)}\ V(X,N)(E)
\end{equation*}
on $V(X,N)$.

The excision isomorphisms
\begin{equation*}
  \phi_{\ast}: \pi_{0}V(X,N)_{s} \xrightarrow{\cong} \pi_{0}V(X,D)_{s}
\end{equation*}
of Lemma \ref{lem 2} restrict to isomorphisms
\begin{equation}
  \phi_{\ast}: \pi_{0}V(X,N)(E)_{s} \xrightarrow{\cong} \pi_{0}V(E,D)_{s}.
\end{equation}

We finish with a corollary of Theorem \ref{th 9}:

\begin{theorem}\label{th 10}
Suppose that the map $i: (E,D) \to (F,D')$ is the ep-metric space morphism between metric spaces that is described above.
  Suppose that for every $y \in F$ there is an $x \in E$ such that $D'(y,i(x)) < r$. 
 Then there are commutative diagrams
  \begin{equation*}
    \xymatrix{
      \pi_{0}V(X,N)(E)_{s} \ar[r]^-{\sigma} \ar[d]_{i} & \pi_{0}V(X,N)(E)_{m(i)\cdot( s +2r)} \ar[d]^{i} \\
      \pi_{0}V(Y,N')(F)_{s} \ar[r]_-{\sigma} \ar[ur]^{\theta} & \pi_{0}V(Y,N')(F)_{m(i) \cdot (s +2r)}
    }
    \end{equation*}
for all $0 \leq s < \infty$, where $m(i)$ is the compression factor for the map $i$.
  \end{theorem}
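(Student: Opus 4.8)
The plan is to deduce Theorem~\ref{th 10} from Theorem~\ref{th 9} by passing to path components and then transporting the resulting diagram across the excision isomorphisms of Lemma~\ref{lem 2}. First I would apply $\pi_{0}$ sectionwise to the diagram of poset morphisms produced by Theorem~\ref{th 9}. For any ep-metric space $(X,d)$ one has $\pi_{0}P_{s}(X,d) = \pi_{0}BP_{s}(X,d) = \pi_{0}V(X,d)_{s}$, naturally in ep-metric space maps, and $\pi_{0}$ identifies any pair of simplicially homotopic maps. Hence applying $\pi_{0}$ to the diagram of Theorem~\ref{th 9} gives a diagram of systems of sets
\begin{equation*}
  \xymatrix{
    \pi_{0}V(E,D)_{s} \ar[r]^-{\sigma} \ar[d]_{i} & \pi_{0}V(E,D)_{m(i)(s+2r)} \ar[d]^{i} \\
    \pi_{0}V(F,D')_{s} \ar[r]_-{\sigma} \ar[ur]^{\theta} & \pi_{0}V(F,D')_{m(i)(s+2r)}
  }
\end{equation*}
in which \emph{both} triangles now commute strictly: the upper triangle already commutes at the level of posets, and the homotopy witnessing commutativity of the lower triangle in Theorem~\ref{th 9} becomes an equality after $\pi_{0}$.

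Next I would transport this diagram along the excision isomorphisms. By Lemma~\ref{lem 2}, applied componentwise as in the discussion preceding Theorem~\ref{th 10}, the canonical map $\phi$ induces isomorphisms of systems $\phi_{\ast}\colon \pi_{0}V(X,N)(E)_{s} \xrightarrow{\cong} \pi_{0}V(E,D)_{s}$ and $\phi_{\ast}\colon \pi_{0}V(Y,N')(F)_{s} \xrightarrow{\cong} \pi_{0}V(F,D')_{s}$. Conjugating the previous diagram by these isomorphisms produces exactly the commutative diagram of systems of sets asserted in Theorem~\ref{th 10}, with $\theta$ replaced by $\phi_{\ast}^{-1}\circ\theta\circ\phi_{\ast}$; here one must know that, under $\phi_{\ast}$, the vertical maps correspond to the maps $i$ induced by $i\colon X\subset Y$ and the horizontal maps correspond to the structure maps $\sigma$ of the systems.

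The substantive point, and the one I expect to require the most care, is exactly this last compatibility: naturality of the excision isomorphism $\phi_{\ast}$ with respect to both the inclusion and the scale changes. Compatibility with the structure maps $\sigma$ is immediate, since $\phi$ is a morphism of systems that is the identity on $0$-simplices and $\pi_{0}$ of a simplicial set is a quotient of its set of $0$-simplices. Compatibility with $i$ follows from the commutativity of the square of systems
\begin{equation*}
  \xymatrix{
    V(X,N)(E) \ar[r]^-{i} \ar[d]_{\phi} & V(Y,N')(F) \ar[d]^{\phi} \\
    V(E,D) \ar[r]_-{i} & V(F,D')
  }
\end{equation*}
together with the compatibility of this square with the pullback definitions of $V(X,N)(E)$, $V(Y,N')(F)$ and with the disjoint union splittings over global components (which uses $i(E)\subset F$). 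That square is obtained by restricting to the components $E$ and $F$ the square relating $\vee_{x}V(X,D_{x})\to V(X,D)$ to $\vee_{y}V(Y,D'_{y})\to V(Y,D')$, and the latter square commutes by functoriality of the Vietoris--Rips construction together with the fact, used in Example~\ref{ex 7}, that the hypotheses $N_{x}\subset N'_{i(x)}$ and $0\ne d'_{i(x)}(i(x),i(z))\le d_{x}(x,z)$ are precisely what make all the maps in play well defined. Once this bookkeeping is in place, there is nothing further to compute.
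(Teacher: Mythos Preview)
Your proposal is correct and follows exactly the route the paper intends: the paper presents Theorem~\ref{th 10} simply as a corollary of Theorem~\ref{th 9}, obtained by applying $\pi_{0}$ and transporting along the excision isomorphisms $\phi_{\ast}$ set up just before the statement. You have spelled out the naturality checks (compatibility of $\phi_{\ast}$ with $\sigma$ and with $i$) that the paper leaves implicit, but this is bookkeeping rather than a different argument.
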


Theorem \ref{th 10} is a stability result for clustering in UMAP.

\bibliographystyle{plain}
\bibliography{spt}

\end{document}